\documentclass[twocolumn, 10pt]{amsart}

\addtolength{\textwidth}{9pc}
\addtolength{\textheight}{.0825\textheight} \calclayout

\usepackage{amsmath,amssymb,color}
\usepackage[dvipdfmx]{graphicx}
\usepackage[abs]{overpic} 
\usepackage[dvipdfmx]{pict2e}
\usepackage{lscape}

\newtheorem{theo}{Theorem}

\newtheorem{lem}[theo]{Lemma}

\DeclareMathOperator{\R}{\mathbb{R}}
\DeclareMathOperator{\Z}{\mathbb{Z}}

\begin{document}
\twocolumn[\begin{center}

\textbf{{\large Root systems and graph associahedra}}\\
\vspace{0.5cm}
Miho Hatanaka\\
Department of Mathematics, Osaka City University, Sumiyoshi-ku, Osaka 558-8585, Japan.\\
e-mail : hatanaka.m.123@gmail.com\\
\end{center}
\date{\today}
\vspace{0.5cm}
\textbf{Abstract} : 
It is known that a connected simple graph $G$ associates a simple polytope $P_G$ called a graph associahedron in Euclidean space.  In this paper we show that the set of facet vectors of $P_G$ forms a root system if and only if $G$ is a cycle graph and that the root system is of type A.\\
\vspace{0.5cm}
\textbf{Key words} : graph associahedron; facet vector; root system.
]
\textbf{1.\ Introduction.}\ \ \ 
Let $G$ be a connected simple graph with $n+1$ nodes and its node set $V(G)$ be $[n+1]=\{1,2, \dots, n+1 \}$.
We can construct the graph associahedron $P_G$ in ${\R}^n$ from $G$ (\cite{po}). 
We call a primitive (inward) normal vector to a facet of $P_G$ a \emph{facet vector} and denote by $F(G)$ the set of facet vectors of $P_G$.  
One can observe that when $G$ is a complete graph, $F(G)$ agrees with the primitive edge vectors of the fan formed by the Weyl chambers of a root system of type A (\cite{ab}), in other words, $F(G)$ is \emph{dual} to a root system of type A when $G$ is a complete graph.  
Motivated by this observation, we ask whether $F(G)$ itself forms a root system for a connected simple graph $G$.  
It turns out that $F(G)$ forms a root system if and only if $G$ is a cycle graph (Theorem~\ref{theo:4-1}). 
On the way to prove it, we show that $F(G)$ is centrally symmetric (this is the case when $F(G)$ forms a root system) if and only if $G$ is a cycle graph or a complete graph.  

\smallskip
\textbf{2.\ Construction of graph associahedra.}\ \ \ 
We set 
$$B(G) := \{I \subset V(G) \mid G|I\ \mathrm{is\ connected} \},$$
where $G|I$ is a maximal subgraph of $G$ with the node set $I$ (i.e. the induced subgraph).
The empty set $\emptyset$ is not in $B(G)$.
We call $B(G)$ a {\it graphical building set} of $G$.
We take an $n$-simplex in ${\R}^n$ such that its facet vectors are $e_1, \dots, e_n,$ and $-e_1- \dots -e_n$, where $e_1, \dots, e_n$ are the standard basis of ${\R}^n$.
Each facet vector $e_i\ (1 \leq i \leq n)$ corresponds to an element $\{i\}$ in $B(G)$, and the facet vector $-e_1- \dots -e_n$ corresponds to an element $\{n+1\}$ in $B(G)$. 
We truncate the $n$-simplex along faces in increasing order of dimension.
Let $F_i$ denote the facet of the simplex corresponding to $\{i\}$ in $B(G)$.
For every element 
$I=\{i_1,\dots,i_k\}$ in $B(G) \setminus [n+1]$ we truncate the simplex along the face $F_{i_1} \cap \dots \cap F_{i_k}$ in such a way that the facet vector of the new facet, denoted $F_I$, is the sum of the facet vectors of the facets $F_{i_1}, \dots, F_{i_k}$.
Then the resulting polytope, denoted $P_G$, is called a {\it graph associahedron}.
We denote by $F(G)$ the set of facet vectors of $P_G$.  

\smallskip
\textbf{3.\ Facet vectors associated to complete grap-hs.}\ \ \ 
As mentioned in the Introduction, $F(G)$ is \emph{dual} to a root system of type A when $G$ is a complete graph.  We shall explain what this means.  If $G$ is a complete graph $K_{n+1}$ with $n+1$ nodes, then the graphical building set $B(K_{n+1})$ consists of all subsets of $[n+1]$ except for $\emptyset$ so that the graph associahedron $P_{K_{n+1}}$ is a permutohedron obtained by cutting all faces of the $n$-simplex with facet vectors $e_1,\dots, e_n, -(e_1+\dots+e_n)$.  It follows that  
\begin{equation} \label{eq:4-1}
F(K_{n+1})=\big\{ \pm \sum_{i\in I}e_i\mid \emptyset\not=I\subset [n]\big\}.
\end{equation}
On the other hand, consider the standard root system $\Delta(A_n)$ of type $A_n$ given by 
\begin{equation} \label{eq:4-1-1}
\Delta(A_n)=\{ \pm(e_i-e_j)\mid 1\le i<j\le n+1\} 
\end{equation} 
which lies on the hyperplane $H$ of $\R^{n+1}$ with $e_1+\dots+e_{n+1}$ as a normal vector.  
Take $e_1-e_2, e_2-e_3,\dots, e_n-e_{n+1}$ as a base of $\Delta(A_n)$ as usual.  
Then their dual base with respect to the standard inner product on ${\R}^{n+1}$ is what is called the fundamental dominant weights given by 
\begin{eqnarray*} \label{eq:4-1-2}
&\lambda_i=(e_1+\dots+e_i)-\frac{i}{n+1}(e_1+\dots+e_{n+1}) \nonumber \\
&\text{$(i=1,2,\dots,n)$} 
\end{eqnarray*}
which also lie on the hyperplane $H$.  
The Weyl group action permutes $e_1,\dots,e_{n+1}$ so that it preserves $H$.  
We identify $H$ with the quotient vector space $H^*$ of $\R^{n+1}$ by the line spanned by $e_1+\dots+e_{n+1}$ using the inner product, namely put the condition $e_1+\dots+e_{n+1}=0$. 
Then the set of elements obtained from the orbits of $\lambda_1,\dots,\lambda_n$ by the Weyl group action is  
\[
\big\{\sum_{j\in J}e_j\mid \emptyset\not=J\subset [n+1]\big\} \quad \text{in $H^*$}.
\]
This set agrees with $F(K_{n+1})$ in \eqref{eq:4-1} because $e_{n+1}=-(e_1+\dots+e_n)$. In this sense $F(K_{n+1})$ is \emph{dual} to $\Delta(A_n)$. 

\smallskip
\textbf{4.\ Main theorem.}\ \ \ 
We note that $F(K_{n+1})$ itself forms a root system (of type $A_n$) when $n=1$ or $2$. However the following holds. 

\noindent
\begin{lem} \label{lem:4.1.0}
 {\it If $n\ge 3$, then $F(K_{n+1})$ does not form a root system.}
\end{lem}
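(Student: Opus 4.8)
The plan is to read ``$F(K_{n+1})$ forms a root system'' in the only way consistent with the remark that the cases $n=1,2$ do give root systems: namely, that there exists a positive-definite inner product on $\R^n$ with respect to which $F(K_{n+1})$ satisfies the root system axioms (equivalently, $F(K_{n+1})$ is the image of a reduced root system under a linear isomorphism). This reading is forced, because already for $n=2$ the set $\{\pm e_1,\pm e_2,\pm(e_1+e_2)\}$ is closed under reflections only for the inner product coming from the $A_2$ Cartan matrix, not the standard one. Every invariant I intend to use---cardinality, linear (in)dependence of subsets, and indecomposability---is preserved by linear isomorphisms, so I may argue purely combinatorially.

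First I record that $|F(K_{n+1})|=2(2^n-1)$, since there are $2^n-1$ nonempty subsets of $[n]$ and each contributes a $\pm$ pair; moreover no element of $F(K_{n+1})$ is a scalar multiple of another except for its negative, so any root system realizing it is necessarily reduced. The crucial structural step is to show that $F(K_{n+1})$ is \emph{linearly indecomposable}: there is no partition $F(K_{n+1})=A\sqcup B$ into nonempty parts whose spans meet only in $0$ and together fill $\R^n$. Indeed, if $e_i\in A$ and $e_j\in B$ for some $i\neq j$, then $e_i+e_j\in F(K_{n+1})$ (the pair $\{i,j\}$ is an edge of $K_{n+1}$) lies in one of the parts, forcing the other basis vector into $\mathrm{span}(A)\cap\mathrm{span}(B)=0$, a contradiction; hence all of $e_1,\dots,e_n$ lie in a single part, which then spans $\R^n$ and leaves the other part empty. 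Because a reducible root system splits into two mutually orthogonal, hence linearly complementary, subsystems, this indecomposability shows that if $F(K_{n+1})$ were a root system it would have to be \emph{irreducible}.

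It then remains to compare $2(2^n-1)$ with the numbers of roots of irreducible reduced root systems of rank $n$ supplied by the classification: $n(n+1)$ for $A_n$, $2n^2$ for $B_n$ and $C_n$, $2n(n-1)$ for $D_n$, and the finitely many exceptional values $12,48,72,126,240$ occurring in ranks $2,4,6,7,8$. I will check directly that $2(2^n-1)$ equals \emph{none} of these for $3\le n\le 6$ (e.g.\ $14\notin\{12,18\}$, $30\notin\{20,24,32,48\}$, $62\notin\{30,40,50\}$, $126\notin\{42,60,72\}$), and for $n\ge 7$ observe that $2(2^n-1)>2n^2$ together with $2(2^n-1)>240$ places the cardinality strictly above every irreducible count. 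This contradicts irreducibility and finishes the proof.

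The step I expect to be the main obstacle---and the reason it is worth isolating indecomposability---is the case $n=3$: there $|F(K_4)|=14$ \emph{does} equal the number of roots of the reducible system $A_1\oplus G_2$, so a naive count against \emph{all} root systems fails, and it is precisely the indecomposability of $F(K_4)$ (no six of its seven generating vectors lie in a plane) that excludes this coincidence. Once the classification values are tabulated, the remaining cases are routine arithmetic.
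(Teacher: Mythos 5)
Your proof is correct and shares the paper's overall strategy---reduce to the irreducible case, then compare cardinalities against the classification---but both steps are executed differently. For irreducibility the paper uses a mod $2$ reduction: for a root subsystem $E=F(K_{n+1})\cap V$ with $\dim V=m$, the images of its elements in $(\Z^n\cap V)\otimes\Z/2$ are distinct up to sign because every coordinate of a facet vector is $0$ or $\pm 1$, so $E$ has at most $2^m-1$ positive roots; a two-component decomposition would then yield at most $(2^{m_1}-1)+(2^{m_2}-1)<2^n-1$ positive roots, a contradiction. Your argument is more elementary and structural: the closure property $e_i+e_j\in F(K_{n+1})$ forces all of $e_1,\dots,e_n$ into one part of any decomposition into subsets with transversal spans, so the other part is empty; this needs no counting and is manifestly independent of the choice of inner product. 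For the final step the paper counts positive roots ($2^n-1$) and cites Humphreys' table, while you count all roots ($2(2^n-1)$), tabulate the irreducible values yourself, and close the general case with $2(2^n-1)>\max(2n^2,240)$ for $n\ge 7$; both are sound. Two points you make explicit that the paper leaves implicit are worth keeping: the clarification that ``forms a root system'' must mean ``with respect to some positive-definite inner product'' (already unavoidable for $n=2$), and the remark that for $n=3$ the count $14$ coincides with that of $A_1\oplus G_2$, which is precisely why the irreducibility step cannot be omitted.
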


\begin{proof}
Suppose that $F(K_{n+1})$ forms a root system for $n\ge 3$.  Then $F(K_{n+1})$ is of rank $n$ and the number of positive roots is $2^n-1$ by \eqref{eq:4-1}.  On the other hand, no irreducible root system of rank $n(\ge 3)$ has $2^n-1$ positive roots (see \cite[Table 1 in p.66]{hu}).  Therefore, it suffices to show that $F(K_{n+1})$ is irreducible if it forms a root system. 

Let $V$ be an $m$-dimensional linear subspace of $\R^{n}$ such that $E=F(K_{n+1})\cap V$ is a root subsystem of $F(K_{n+1})$.  We consider the mod $2$ reduction map 
\[
\varphi \colon \Z^n\cap V\to (\Z^n\cap V)\otimes \Z/2
\]
where $\Z/2=\{0,1\}$. Since $(\Z^n\cap V)\otimes \Z/2$ is a vector space over $\Z/2$ of dimension $\le m$, it contains at most $2^m-1$ nonzero elements.  On the other hand, since the coordinates of an element in $F(K_{n+1})$ are either in $\{0,1 \}$ or $\{0,-1 \}$ by \eqref{eq:4-1}, the number of elements in $\varphi(E)$ is exactly equal to the number of positive roots in $E$.  

Now suppose that the root system $F(K_{n+1})$ decomposes into the union of two nontrivial components $E_i$ for $i=1,2$.  Then there are $m_i$-dimensional linear subspaces $V_i$ of $\R^n$ such that $E_i=F(K_{n+1})\cap V_i$ and $m_1+m_2=n$, where $m_i\ge 1$.  
Since the number of positive roots in $E_i$, denoted by $p_i$, is at most $2^{m_i}-1$ by the observation above, we have $$p_1+p_2\le (2^{m_1}-1)+(2^{m_2}-1)<2^n-1.$$  However, since $F(K_{n+1})=E_1\cup E_2$ and the number of positive roots in $F(K_{n+1})$ is $2^n-1$ as remarked before, we must have $2^n-1=p_1+p_2$.  This is a contradiction.  Therefore, $F(K_{n+1})$ must be irreducible if it forms a root system.  
\end{proof}

The following is our main theorem. 

\noindent
\begin{theo} \label{theo:4-1}
 {\it Let $G$ be a connected finite simple graph with more than two nodes. Then the set $F(G)$ of facet vectors of the graph associahedron associated to $G$ forms a root system if and only if $G$ is a cycle graph. Moreover, the root system associated to the cycle graph with $n+1$ nodes is of type $A_n$.}
\end{theo}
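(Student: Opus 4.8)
The plan is to translate central symmetry into a purely graph-theoretic condition and then combine it with Lemma~\ref{lem:4.1.0}. First I would record an explicit description of the facet vectors: setting $e_{n+1}:=-(e_1+\dots+e_n)$, the construction gives, for each proper $I\in B(G)$,
\[
F_I=\sum_{i\in I}e_i\ \ (n+1\notin I),\qquad F_I=-\!\!\sum_{i\in[n]\setminus I}e_i\ \ (n+1\in I),
\]
so that $F(G)=\{F_I\mid I\in B(G),\ I\neq[n+1]\}$, the map $I\mapsto F_I$ being injective. A direct check then shows $-F_I=F_{[n+1]\setminus I}$, whence $-F_I\in F(G)$ if and only if $[n+1]\setminus I\in B(G)$. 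Consequently $F(G)$ is centrally symmetric if and only if $G$ satisfies
\[
(\ast)\qquad G|I\ \text{connected}\iff G|([n+1]\setminus I)\ \text{connected}
\]
for every proper nonempty $I\subset[n+1]$ (the two implications being equivalent under $I\leftrightarrow[n+1]\setminus I$). The core of the argument is the lemma that $(\ast)$ holds if and only if $G$ is a cycle or a complete graph; I discuss its proof last.

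Granting this, the theorem follows quickly. For the ``if'' direction suppose $G$ is the cycle $C_{n+1}$ on $1,\dots,n+1$. Its connected proper induced subgraphs are exactly the arcs, and in $H^{*}=\R^{n+1}/\langle e_1+\dots+e_{n+1}\rangle$ each arc $I=\{a,a+1,\dots,b\}$ (indices mod $n+1$) has facet vector $\sum_{i\in I}e_i=p_{b}-p_{a-1}$, where $p_k:=e_1+\dots+e_k$ and $p_0=p_{n+1}=0$. As $I$ runs over all proper arcs, $(a-1,b)$ runs over all ordered pairs $(u,v)$ of distinct residues, so $F(C_{n+1})=\{\,p_v-p_u\mid 0\le u,v\le n,\ u\neq v\,\}$. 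Since $p_1,\dots,p_n$ form a basis of $H^{*}$, the assignment $p_k\mapsto \epsilon_k-\epsilon_0$ (with $\epsilon_0,\dots,\epsilon_n$ the standard basis of $\R^{n+1}$) extends to a linear isomorphism $L$ from $H^{*}$ onto $\{\sum x_i=0\}$ carrying $p_v-p_u$ to $\epsilon_v-\epsilon_u$. Thus $L$ maps $F(C_{n+1})$ bijectively onto the standard root system $\{\epsilon_v-\epsilon_u\mid u\neq v\}$ of type $A_n$, and pulling back the inner product exhibits $F(C_{n+1})$ as a root system of type $A_n$.

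For the ``only if'' direction, a root system is centrally symmetric, so if $F(G)$ is a root system then $(\ast)$ holds and $G$ is a cycle or a complete graph. If $G=K_{n+1}$ with $n\ge 3$, then $F(K_{n+1})$ is not a root system by Lemma~\ref{lem:4.1.0}, a contradiction; and for $n=2$ the only complete graph on $n+1$ nodes is $K_3=C_3$, which is itself a cycle. Hence $G$ must be a cycle, and the type is $A_n$ by the previous paragraph.

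The main obstacle is the forward direction of the lemma on $(\ast)$, and here the plan is structural. Applying $(\ast)$ to singletons shows $G-v$ is connected for every $v$, so $G$ is $2$-connected; applying it to pairs gives the dichotomy that $\{u,w\}$ is an edge exactly when $G-\{u,w\}$ is connected. If some vertex were universal, then removing any non-edge (which must avoid it) would leave it adjacent to all survivors, contradicting disconnection; hence a non-complete $G$ has no universal vertex. Next, for a non-adjacent pair $\{v,x\}$ with components $C_1,\dots,C_m$ of $G-\{v,x\}$, both $v$ and $x$ meet every $C_i$ by $2$-connectivity, so $C_1\cup\{v,x\}$ is connected while its complement $C_2\cup\dots\cup C_m$ is disconnected unless $m=2$; thus removing any non-edge yields exactly two components. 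The remaining and most delicate step is to upgrade this, in the non-complete case, to $2$-regularity: I would analyse how $v$ and $x$ attach to $C_1$ and $C_2$, using $(\ast)$ on sets of the form $(V\setminus C_1)\cup S$ to force each $C_i$ to be an induced path met by $v$ and $x$ only at its ends, so that $G$ is a single cycle. Verifying that these attachment constraints leave no room for chords or extra edges is where the real work lies.
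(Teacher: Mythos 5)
Your overall architecture coincides with the paper's: the reduction of central symmetry to complement-closure of $B(G)$ is the paper's Lemma~\ref{lem:4.1.2}; the characterization of complement-closed building sets as those of cycles and complete graphs is Lemma~\ref{lem:4.1.3}; the exclusion of $K_{n+1}$ for $n\ge 3$ (together with the remark that $K_3=C_3$) is Lemma~\ref{lem:4.1.0}; and your isomorphism $p_k\mapsto \epsilon_k-\epsilon_0$ is a reparametrization of the paper's map $e_i\mapsto e_i-e_{i+1}$ in Lemma~\ref{lem:4.1.1}. Those parts of your write-up are correct and complete.

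The gap is that the decisive step --- the \lq\lq only if" direction of the characterization lemma --- is announced but not proved. You get exactly as far as the paper's first half: $G$ is $2$-connected, a pair is a non-edge iff its removal disconnects $G$, and removing a non-edge $\{v,x\}$ leaves exactly two components $C_1,C_2$, each attached to both $v$ and $x$. But the claim that each $C_i$ must then be an induced path met by $v$ and $x$ only at its two ends is, as you yourself say, \lq\lq where the real work lies," and you do not carry it out; as written, your plan does not rule out chords inside a $C_i$, attachments of $v$ or $x$ to interior vertices, or non-path components. The paper closes this by choosing attachment vertices $i_1,j_1\in C_1$ and $i_2,j_2\in C_2$ together with a shortest path $P_1$ from $i_1$ to $j_1$ inside $C_1$ satisfying $P_1\neq C_1$ (such a choice is available whenever $G$ is not a cycle); then $V(P_1)\cup\{v,x,i_2,j_2\}$ induces a connected subgraph, so by complement-closure its complement is also connected, which forces an edge between $C_1$ and $C_2$ and contradicts their being distinct components of $G-\{v,x\}$. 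Until you supply an argument of this kind, the \lq\lq only if" direction of the theorem is not established.
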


The rest of this paper is devoted to the proof of Theorem~\ref{theo:4-1}. 
We begin with the following lemma. 

\noindent
\begin{lem} \label{lem:4.1.1}
 {\it Let $C_{n+1}$ be the cycle graph with $n+1$ nodes. Then $F(C_{n+1})$ forms a root system of type $A_n$.}
\end{lem}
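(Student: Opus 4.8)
The plan is to determine the building set $B(C_{n+1})$ explicitly, read off $F(C_{n+1})$ as a set of signed consecutive sums of the $e_i$, and then exhibit a linear isomorphism carrying $F(C_{n+1})$ onto the standard root system $\Delta(A_n)$ inside the hyperplane $H\subset\R^{n+1}$ of \eqref{eq:4-1-1}. First I would identify $B(C_{n+1})$. Writing the cycle as $1-2-\cdots-(n+1)-1$, the induced subgraph $C_{n+1}|I$ on a nonempty $I\subsetneq[n+1]$ is connected exactly when $I$ is an \emph{arc}, i.e.\ a block of cyclically consecutive nodes; the only remaining connected subset is the full node set $[n+1]$. The one structural fact I will use repeatedly is that in a cycle the complement of an arc is again an arc.

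Next, following Section 2, I set $v_i=e_i$ for $1\le i\le n$ and $v_{n+1}=-(e_1+\cdots+e_n)$, so that the facet vector attached to a connected $I$ is $\sum_{i\in I}v_i$ and $v_1+\cdots+v_{n+1}=0$. I then split into cases. If $n+1\notin I$, then $I$ is an interval $\{a,\dots,b\}\subset[n]$ and $\sum_{i\in I}v_i=\sum_{i=a}^{b}e_i$. If $n+1\in I$ with $I\ne[n+1]$, then $\sum v_i=0$ makes the facet vector $-\sum_{k\notin I}e_k$, and since $[n+1]\setminus I$ is an arc avoiding $n+1$ it is again an interval of $[n]$; the full set contributes the zero vector and hence no facet. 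Checking that the number of proper arcs and the number of signed interval sums agree (both equal $n(n+1)=|\Delta(A_n)|$), this yields
\[
F(C_{n+1})=\Big\{\,\pm\sum_{i=a}^{b}e_i\ \Big|\ 1\le a\le b\le n\,\Big\}.
\]

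Finally I would linearize. Setting $w_0=0$ and $w_k=e_1+\cdots+e_k$ for $1\le k\le n$ turns $\sum_{i=a}^{b}e_i$ into $w_b-w_{a-1}$, so $F(C_{n+1})=\{\,w_p-w_q\mid 0\le p\ne q\le n\,\}$. Let $\epsilon_0,\dots,\epsilon_n$ be the standard basis of $\R^{n+1}$, let $H=\{\,y\mid\sum_i y_i=0\,\}$, and let $\Delta(A_n)=\{\,\epsilon_p-\epsilon_q\mid p\ne q\,\}$ be the root system of \eqref{eq:4-1-1} after relabeling the basis. Define $\phi\colon\R^n\to H$ by $\phi(e_k)=\epsilon_k-\epsilon_{k-1}$; a telescoping sum gives $\phi(w_k)=\epsilon_k-\epsilon_0$, hence $\phi(w_p-w_q)=\epsilon_p-\epsilon_q$, so $\phi$ is a linear isomorphism carrying $F(C_{n+1})$ bijectively onto $\Delta(A_n)$. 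Pulling the standard inner product on $H$ back through $\phi$ makes $\phi$ an isometry, and therefore $F(C_{n+1})$ is a root system of type $A_n$.

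I expect the only genuine obstacle to be the bookkeeping in the facet-vector computation: making sure the arcs through the node $n+1$ are handled correctly, so that the relation $v_1+\cdots+v_{n+1}=0$ converts their contributions into the negative consecutive sums, and confirming via the complement-is-an-arc property that precisely the signed consecutive sums occur, with no repetitions or omissions. Once the explicit form of $F(C_{n+1})$ is in hand, the identification with $\Delta(A_n)$ is routine linear algebra, the one conceptual point being that the inner product is chosen (pulled back through $\phi$) rather than taken to be the standard one on $\R^n$.
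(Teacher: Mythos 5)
Your proposal is correct and follows essentially the same route as the paper: enumerate $B(C_{n+1})$ as the arcs of the cycle, use $v_1+\dots+v_{n+1}=0$ to express the facet vectors as the signed consecutive sums $\pm\sum_{i=a}^{b}e_i$, and then map onto $\Delta(A_n)$ by a linear isomorphism sending $e_k$ to a difference of consecutive basis vectors (your $\phi(e_k)=\epsilon_k-\epsilon_{k-1}$ versus the paper's $e_i\mapsto e_i-e_{i+1}$, which differ only by relabeling). Your explicit remark that the inner product is pulled back through $\phi$ is a small point of added care that the paper leaves implicit.
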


\begin{proof}
An element $I$ in the graphical building set $B(C_{n+1})$ different from the entire set $[n+1]$ is one of the following:\\
(I)\ \ \ $\{i, i+1,\dots, j\}$ where $1\le i\le j\le n$,\\
(I\hspace{-.1em}I)\ \ $\{i,i+1,\dots, n+1\}$ where $2\le i\le n+1$,\\
(I\hspace{-.1em}I\hspace{-.1em}I) $\{i,i+1,\dots, n+1, 1,\dots,j\}$ where $1\le j<i\le n+1$ and $i-j\ge 2$.\\
Therefore the facet vector of the facet corresponding to $I$ is respectively given by 
\[ \sum_{k=i}^j e_k,\qquad -\sum_{k=1}^{i-1}e_k,\qquad -\sum_{k=j+1}^{i-1}e_k \]
according to the cases (I), (I\hspace{-.1em}I), (I\hspace{-.1em}I\hspace{-.1em}I) above.  Hence 
\begin{equation} \label{eq:4-3}
F(C_{n+1})=\big\{\pm \sum_{k=i}^j e_k \mid 1\le i<j\le n\big\}.
\end{equation}
This set forms a root system of type $A_n$.  Indeed, an isomorphism from ${\Z}^n$ to the lattice 
\[ 
\left\{(x_1, \dots, x_{n+1}) \in {\Z}^{n+1} \mid x_1+ \dots  + x_{n+1} = 0 \right\} 
\]
sending $e_i$ to $e_i-e_{i+1}$ for $i=1,2,\dots,n$ maps $F(C_{n+1})$ to the standard root system $\Delta(A_n)$ of type $A_n$ in \eqref{eq:4-1-1}. 
\end{proof}

The following lemma is a key observation. 

\noindent
\begin{lem} \label{lem:4.1.2}
 {\it Let $G$ be a connected simple graph.  
 Then $F(G)$ is \emph{centrally symmetric}, which means that $\alpha\in F(G)$ if and only if $-\alpha\in F(G)$ (note that $F(G)$ is centrally symmetric if  $F(G)$ forms a root system) if and only if 
the following holds:}
 \begin{equation} \label{eq:4.1.1}
 I \in B(G)\ \  \Longrightarrow\ \ V(G) \setminus I \in B(G).
 \end{equation}
\end{lem}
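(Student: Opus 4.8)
The plan is to make the bijection between $B(G)$ and the facet vectors explicit, reduce central symmetry to a combinatorial closure property of $B(G)$, and then settle the equivalence by a single sign computation.

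First I would record a closed formula for the facet vector $v(I)$ attached to $I\in B(G)$. By construction $v(I)$ is the sum of the simplex facet vectors indexed by $I$, so $v(I)=\sum_{i\in I}e_i$ when $n+1\notin I$, while $v(I)=-\sum_{i\in[n]\setminus I}e_i$ when $n+1\in I$ (using $-e_1-\dots-e_n$ for the facet $F_{n+1}$). In particular $v([n+1])=0$, so $F(G)=\{v(I)\mid I\in B(G),\ I\ne[n+1]\}$, and I would check that $I\mapsto v(I)$ is injective on this index set: a vector with $n+1\notin I$ has entries in $\{0,1\}$ and support $I$, one with $n+1\in I$ has entries in $\{0,-1\}$ and support $[n]\setminus I$, and the two families share only $0$, which occurs for neither a proper nonempty $I$. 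Hence $v$ parametrizes $F(G)$ bijectively by $B(G)\setminus\{[n+1]\}$.

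The crux is the identity $v(V(G)\setminus I)=-v(I)$, valid for every $I\subseteq[n+1]$. I would verify it by splitting on whether $n+1\in I$: passing to the complement interchanges the two cases of the formula above, and the elementary identity $[n]\setminus(V(G)\setminus I)=I$ (when $n+1\notin I$), together with a direct reading of the definition (when $n+1\in I$), shows the result is exactly $-v(I)$. Granting this, the lemma is formal. For the implication $\eqref{eq:4.1.1}\Rightarrow$ central symmetry, any $\alpha=v(I)\in F(G)$ has $I$ a proper nonempty member of $B(G)$, so $V(G)\setminus I$ is again such a member and $-\alpha=v(V(G)\setminus I)\in F(G)$. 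Conversely, if $F(G)$ is centrally symmetric then $-v(I)\in F(G)$, say $-v(I)=v(J)$ with $J\in B(G)\setminus\{[n+1]\}$; since $-v(I)=v(V(G)\setminus I)$ and $v$ is injective, $J=V(G)\setminus I\in B(G)$, giving \eqref{eq:4.1.1}.

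The one delicate point I would watch is the bookkeeping around the full node set: $V(G)$ always lies in $B(G)$ yet contributes the zero vector rather than a facet, so \eqref{eq:4.1.1} must be understood on proper nonempty subsets, equivalently on the set $B(G)\setminus\{[n+1]\}$ that genuinely indexes $F(G)$. Pinning down this exclusion and the injectivity of $v$ is what makes both implications tight; by contrast the sign identity $v(V(G)\setminus I)=-v(I)$ is a one-line check once the explicit formula for $v(I)$ is in place.
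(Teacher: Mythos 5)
Your proof is correct and follows essentially the same route as the paper: both rest on the single identity $v(I)+v(V(G)\setminus I)=0$ (the paper writes this as $\alpha_I+\sum_{i\in[n+1]\setminus I}e_i=0$ after setting $e_{n+1}=-(e_1+\dots+e_n)$). Your added checks --- injectivity of $I\mapsto v(I)$ and the exclusion of $I=V(G)$, whose complement is $\emptyset\notin B(G)$ --- are sensible tightenings of details the paper leaves implicit, but they do not change the argument.
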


\begin{proof}
Let $V(G)=[n+1]$ as before 
and let $I$ be an element in $B(G)$ and $\alpha_I$ be the facet vector of the facet of $P_G$ corresponding to $I$.  If we set $e_{n+1}:=-(e_1+\dots+e_n)$, then $\alpha_I=\sum_{i\in I}e_i$.  Since
$
\alpha_I+\sum_{i\in [n+1]\backslash I}e_i=\sum_{i\in [n+1]}e_i=0,
$
we obtain $-\alpha_I=\sum_{i\in [n+1]\backslash I}e_i$ and this implies the lemma.   
\end{proof}

Using Lemma~\ref{lem:4.1.2}, we prove the following. 

\noindent
\begin{lem} \label{lem:4.1.3}
 {\it Let $G$ be a connected finite simple graph.
 Then $B(G)$ satisfies} \rm{(\ref{eq:4.1.1})} {\it if and only if $G$ is a cycle graph or a complete graph.}
\end{lem}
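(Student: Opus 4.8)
\emph{Reformulation.} Since $I\in B(G)$ holds exactly when $G|I$ is connected, condition \textup{(\ref{eq:4.1.1})} says that the complement of every proper connected induced subgraph is again connected; equivalently, by contraposition, \emph{no connected proper vertex subset separates $G$}. This ``no connected separator'' form is what I would use throughout. For the \emph{if} direction this is immediate: if $G=K_{n+1}$ then every induced subgraph is complete, hence connected, so \textup{(\ref{eq:4.1.1})} holds; and if $G$ is a cycle, then by the description in Lemma~\ref{lem:4.1.1} the connected proper induced subgraphs are exactly the arcs, and the complement of an arc is again an arc, hence connected.

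\emph{Only if direction: first consequences.} Assume \textup{(\ref{eq:4.1.1})}. Applying the reformulation to singletons $\{v\}$ shows $G-v$ is connected for every $v$, so $G$ is $2$-connected. Applying \textup{(\ref{eq:4.1.1})} with $I=V(G)\setminus\{a,b\}$ for a non-edge $\{a,b\}$ forces $G-\{a,b\}$ to be disconnected, since otherwise $G|I$ would be connected while its complement $\{a,b\}$ is not. Thus \emph{every pair of non-adjacent vertices is a $2$-cut} $(\ast)$. If $G$ is complete we are done, so assume $G$ has a non-edge; then no vertex has degree $n$, for such a vertex would be adjacent to, and hence connect, every vertex of $G-\{a,b\}$, contradicting $(\ast)$. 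In particular every vertex has a non-neighbour.

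\emph{The crux: $G$ is $2$-regular.} Fix $v$ and a non-neighbour $u$; by $(\ast)$, $u$ is a cut vertex of $G-v$, so $G-v$ has at least two blocks and hence at least two end-blocks. Suppose for contradiction that $\deg v\ge 3$. Choose non-cut vertices $p,w$ of $G-v$ lying in two \emph{different} end-blocks. Each is adjacent to $v$, since a non-neighbour of $v$ would by $(\ast)$ be a cut vertex of $G-v$; and $p\not\sim w$ because they lie in distinct blocks. Deleting $p$ and $w$ keeps $G-v$ connected (their removals take place in different blocks and so do not interfere), and because $\deg v\ge 3$ the vertex $v$ retains a third neighbour through which it stays attached; hence $G-\{p,w\}$ is connected. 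But $\{p,w\}$ is a non-edge, contradicting $(\ast)$. Therefore every vertex has degree $2$, and a connected $2$-regular graph is a cycle.

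\emph{Main obstacle.} All the difficulty sits in this last step: turning the purely combinatorial condition \textup{(\ref{eq:4.1.1})} into a bound on degrees. The enabling idea is the ``no connected separator'' reformulation combined with the block decomposition of $G-v$, which produces two non-cut vertices in distinct end-blocks that are automatically non-adjacent and, once $\deg v\ge 3$, jointly non-separating --- precisely the forbidden configuration. The delicate points to check are that removing both non-cut vertices leaves $G-v$ connected and that $v$ remains attached via a third neighbour; the latter is exactly where the argument (correctly) breaks down for a genuine cycle, in which every vertex has degree $2$.
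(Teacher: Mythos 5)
Your proof is correct, but it takes a genuinely different route from the paper's. Both arguments share the same opening move: for a non-edge $\{a,b\}$, condition \eqref{eq:4.1.1} applied to $I=V(G)\setminus\{a,b\}$ forces $G-\{a,b\}$ to be disconnected (your $(\ast)$). From there the paper fixes one non-edge $\{i,j\}$, shows $G-\{i,j\}$ has exactly two components $G_1,G_2$ by applying \eqref{eq:4.1.1} to $V(G_1)\cup\{i,j\}$, and then eliminates every configuration other than ``two paths attached to $i$ and $j$ at their endpoints'' by a further application of \eqref{eq:4.1.1} to a set built from a shortest path inside $G_1$. You instead convert \eqref{eq:4.1.1} into the degree bound $\deg v\le 2$ via the block decomposition of $G-v$: a non-neighbour of $v$ is a cut vertex of $G-v$, so there are two end-blocks, whose internal vertices $p,w$ must be neighbours of $v$ and are mutually non-adjacent, and if $\deg v\ge 3$ then $\{p,w\}$ fails to separate $G$, contradicting $(\ast)$. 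Your route buys a cleaner endgame (connected and $2$-regular implies cycle) and avoids the paper's path-by-path case analysis; the price is reliance on block-decomposition facts, and the one step you should write out in full is that deleting the two non-cut vertices $p,w$ from \emph{distinct end-blocks} leaves $G-v$ connected. That claim is false for two arbitrary non-cut vertices (two opposite vertices of a $4$-cycle disconnect it), so the end-block hypothesis is doing real work; it does hold here because all neighbours of $p$ lie in its end-block $B_1$, so any walk through $p$ can be rerouted inside $B_1-p$, which does not contain $w$, and symmetrically for $w$. With that detail filled in, your argument is complete and arguably tidier than the published one.
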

\begin{proof}
 If $G$ is a cycle or complete graph, then $F(G)$ is centrally symmetric by \eqref{eq:4-1} or \eqref{eq:4-3} and hence $B(G)$ satisfies \rm{(\ref{eq:4.1.1})} by Lemma~\ref{lem:4.1.2}. So the \lq\lq if" part is proven.  
 
 We shall prove the \lq\lq only if" part, so we assume that $B(G)$ satisfies \eqref{eq:4.1.1}.  Suppose that $G$ is not a complete graph.  Then there are $i\not=j\in V(G)$ such that $\{i,j \}$ is not contained in $B(G)$. By \eqref{eq:4.1.1}, $V(G) \setminus \{i,j \}$ is not contained in $B(G)$, which means that the induced subgraph $G|(V(G) \setminus \{i,j \})$ is not connected.  On the other hand, since $B(G)$ contains $\{i\}$ and $\{j\}$, $B(G)$ contains $V(G) \setminus \{i\}$ and $V(G) \setminus \{j\}$ by \eqref{eq:4.1.1}.
Hence
\begin{eqnarray} \label{eq:4.1.2}
\quad  &G|(V(G) \setminus \{i\}) \mathrm{\ and\ }\ G|(V(G) \setminus \{j\}) \\
    &\mathrm{are\ connected}. \nonumber
\end{eqnarray}

Let $k$ be the number of connected components of $G|(V(G) \setminus \{i,j \})$ and we denote its $k$ components by $G_1, \dots, G_k$  (Figure~\ref{fig:general_case}). By (\ref{eq:4.1.2}), the nodes $i$ and $j$ are respectively joined to every connected component by at least one edge. Since $G|(V(G_1) \cup \{i,j \})$ is connected, $G|(V(G_2)\cup \dots \cup V(G_k))$ is also connected by (\ref{eq:4.1.1}). However, $G|(V(G_2)\cup \dots \cup V(G_k))$ is the disjoint union of the connected subgraphs $G_2,\dots, G_k$. Therefore we have $k=2$.

 \begin{figure}[htbp]
 \begin{center}
  \includegraphics[width=5cm]{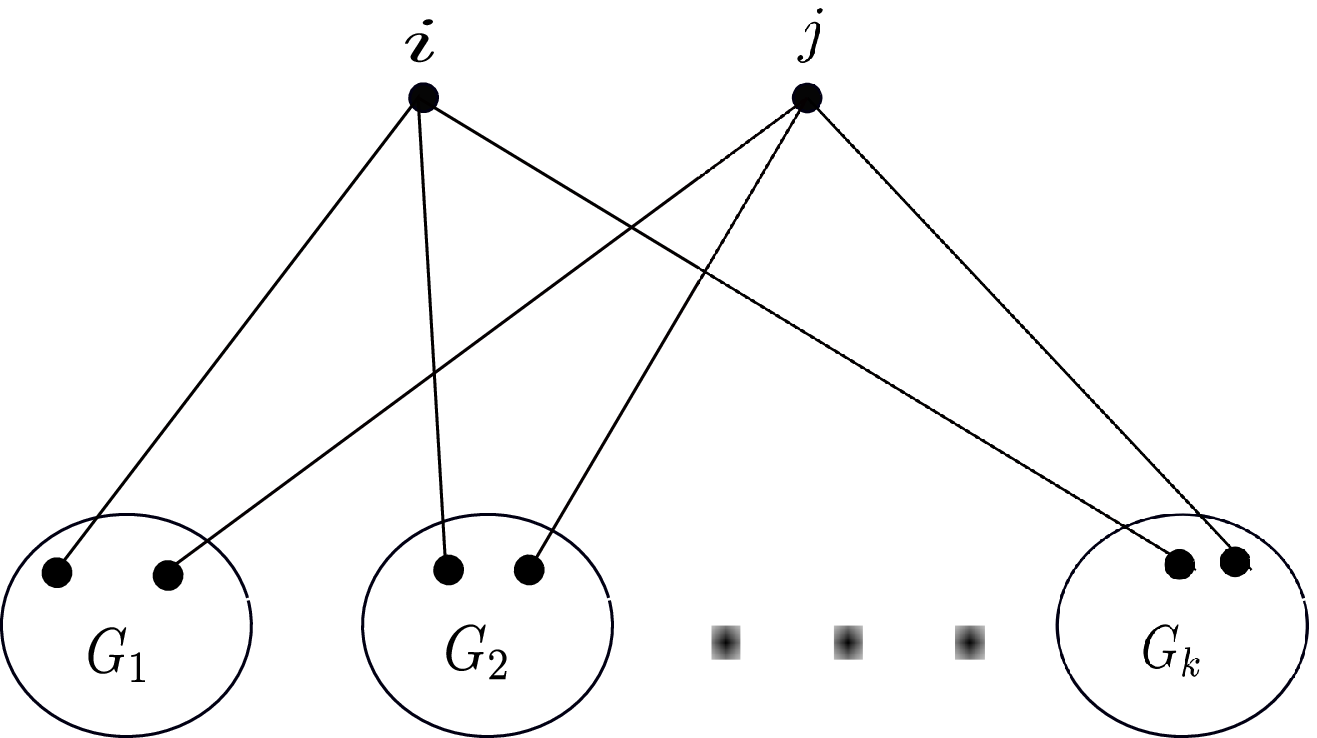}
 \end{center}
\caption{}  
\label{fig:general_case}
 \end{figure}
 
If $G_1$ and $G_2$ are both path graphs and the node $i$ is joined to one end node of $G_1, G_2$ respectively and the node $j$ is joined to the other end node of $G_1, G_2$, then $G$ is a cycle graph (Figure~\ref{fig:cycle_graph}).
 \begin{figure}[htbp]           
  \begin{center}
   \includegraphics[width=5cm]{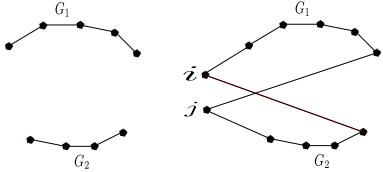}
  \end{center}
  \caption{the case of cycle graph}
  \label{fig:cycle_graph}
 \end{figure}
 
 We consider the other case, that is, either
\begin{enumerate}
\item[(I)] $G_1$ or $G_2$ is not a path graph, or 
\item[(II)] both $G_1$ and $G_2$ are path graphs but the nodes $i$ and $j$ are not joined to the end points of $G_1$ and $G_2$ (see Figure~\ref{fig:another_case}, left).  
\end{enumerate}
Then there exist nodes $i_1, j_1\in V(G_1)$ and $i_2, j_2\in V(G_2)$ such that 
\begin{enumerate}
\item[$\bullet$] $i_1$ and $i_2$ are joined to $i$,
\item[$\bullet$] $j_1$ and $j_2$ are joined to $j$, and
\item[$\bullet$] either the shortest path $P_1$ from $i_1$ to $j_1$ in $G_1$ is not the entire $G_1$ or the shortest path $P_2$ from $i_2$ to $j_2$ in $G_2$ is not the entire $G_2$.
\end{enumerate} 
\begin{figure}[htbp]           
  \begin{center}
   \includegraphics[width=6cm]{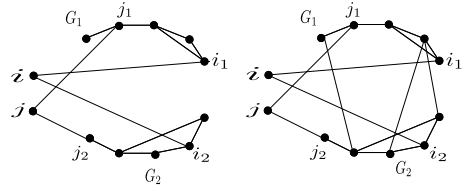}
  \end{center}
  \caption{the other case}
  \label{fig:another_case}
 \end{figure}
 
Without loss of generality we may assume that $P_1 \neq G_1$.  
Since $G|(V(P_1)\cup\{i,j,i_2,j_2\})$ is connected, so is $G|(V(G) \setminus (V(P_1)\cup\{i,j,i_2,j_2\}))$ by (\ref{eq:4.1.1}). This means that there is at least one edge joining $G_1$ and $G_2$ (Figure~\ref{fig:another_case}, right), and hence $G|(V(G) \setminus \{i,j \})$ is connected.  This contradicts that $G|(V(G) \setminus \{i,j \})$ consists of two connected components. 
\end{proof}

Now Theorem~\ref{theo:4-1} follows from  Lemmas~\ref{lem:4.1.0}, \ref{lem:4.1.1}, \ref{lem:4.1.2} and \ref{lem:4.1.3}.

\textbf{Acknowledgements.}\ \ \ 
I would like to thank Mikiya Masuda and Hiraku Abe for many interesting and fruitful discussions on this subject. 
This work was partially supported by Grant-in-Aid for JSPS Fellows 27 $\cdot$ 0184.

\end{document}